\newtheorem{prop}{Proposition}[section]
\newcommand{\BZ}{{\mathbb{Z}}}
\newcommand{\BN}{{\mathbb{N}}}
\newcommand{\BR}{{\mathbb{R}}}
\newcommand{\BQ}{{\mathbb{Q}}}
\newcommand{\BP}{{\mathbb{P}}}
\newcommand{\BG}{{\mathbb{G}}}
\newcommand{\OO}{{\mathcal{O}}}
\newcommand{\gD}{\Delta}
\newcommand{\gd}{\delta}
\newcommand{\gC}{\Gamma}
\newcommand{\gc}{\gamma}
\newcommand{\gO}{\Omega}
\newcommand{\gep}{\epsilon}
\newcommand{\p}{\prod}
\newcommand{\SL}{\text{SL}}
\newcommand{\GL}{\text{GL}}
\newcommand{\SO}{\text{SO}}
\newtheorem{thm}[prop]{Theorem}
\newtheorem{lem}[prop]{Lemma}
\theoremstyle{definition}
\newtheorem{rem}[prop]{Remark}
\newtheorem{asmp}[prop]{Assumption}
\def\Z{\Bbb Z}
\def\P{\Bbb P}
\def\R{\Bbb R}
\def\P{\Bbb P}
\def\p^k{\tilde{\P}^k}
\long\def\@savemarbox#1#2{\global\setbox#1\vtop{\hsize\marginparwidth 
%%%%%  \@parboxrestore #2}}
  \@parboxrestore\tiny\raggedright #2}}
\begin{document}
\author{T. Gelander and C. Meiri} 

\address{Mathematics and Computer Science\\
Weizmann Institute\\
Rechovot 76100, Israel\\}
\email{tsachik.gelander@gmail.com}

\address{Department of Mathematics\\ Technion - Israel Institute of Technology\\
Haifa, 32000, Israel\\}
\email{chenm@technion.ac.il}

\date{\today}

%\title{A metric version of the Jordan--Turing theorem}

\title{Higher rank arithmetic groups which are not invariably generated}

\begin{abstract}
It was conjectured in \cite{KLS} that for arithmetic groups, Invariable Generation is equivalent to the Congruence Subgroup Property. In view of the famous Serre conjecture this would imply that higher rank arithmetic groups are invariably generated. 
In this paper we prove that some higher rank arithmetic groups are not invariably generated.
\end{abstract}

\maketitle

%\section{introduction}
A subset $S$ of a group $\gC$ {\it invariably generates} $\gC$ if $\gC= \langle s^{g(s)} | s \in S\rangle$ for every choice of $g(s) \in \gC,s \in S$. One says that a group $\gC$ is {\it invariably generated}, or shortly IG if one of the following equivalent conditions holds:
\begin{enumerate}
\item There exists $S \subseteq \gC$ which invariably generates $\gC$.
\item  The set $\gC$ invariably generates $\gC$.
%It is obvious that $G$ is not IG iff it admits a proper subgroup $H<G$ which meets every conjugacy class. 
\item Every transitive permutation representation on a non-singleton set admits a fixed-point-free element.
\item $\gC$ does not have a proper subgroup which meets every conjugacy class.
\end{enumerate}

In an attempt to give a purely algebraic characterization for the Congruence Subgroup Property  (CSP), it was asked in \cite{KLS} whether for arithmetic groups in simply connected, connected, simple Lie groups the CSP is equivalent to IG.  This was partly motivated by the result of \cite{KLS} that the pro-finite completion of an arithmetic group with the CSP is topologically finitely invariably generated. Another result in the positive direction was obtained in \cite{Gel}, namely that discrete subgroups, and in particular arithmetic subgroups, of rank one simple Lie groups are not IG. Recall the famous Serre conjecture that an irreducible arithmetic lattice $\gC$ in a simply connected, connected, simple Lie group $G$ has the CSP iff $\text{rank}_\BR(G)\ge 2$. In this paper we give examples of arithmetic lattices in $\SL_3(\R)$ and other higher rank groups which are not IG. If Serre's conjecture is true,  these examples give a negative answer the question of \cite{KLS11}.

Let $L$ be a cubic Galois extension of $\BQ$. Let $D$ be a central division algebra of degree $3$ over $\BQ$ which contains $L$ as a subfield. Let $\OO_D$ be an order in $D$. Then $\Gamma_D:=\SL_1(\OO_D)$ is isomorphic to an anisotropic arithmetic lattice in $\SL(3,\BR)$ (See \cite[Proposition 6.61]{Witte}). 
%Being a lattice in $\SL(3,\BR)$, the group $\Gamma_D$ possesses many rigidity properties. 
The corresponding lattice is co-compact. By Serre's conjecture, $\Gamma_D$ is expected to possess the CSP, but this is currently unknown.

\begin{thm}\label{thm2} 
The arithmetic group $\Gamma_D$ is not IG.
\end{thm}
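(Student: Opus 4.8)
The plan is to realise $\SL_1(\OO_D)$ as a \emph{regular} lattice in $\SL_3(\BR)$ and then invoke Theorem~\ref{thm:regular}; the only work specific to Theorem~\ref{thm2} is the verification of regularity, and it is here that the hypothesis that $\deg D=3$, and in particular that this is prime, enters.

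First I would record the arithmetic set-up of Example~\ref{exm:Witte} (see \cite[Proposition 6.61]{Witte}). Since $L/\BQ$ is cyclic cubic and embeds in $D$, the algebra $D$ is a cyclic algebra $(L/\BQ,\sigma,b)$ for a generator $\sigma$ of $\gal(L/\BQ)$ and some $b\in\BQ^{\times}$, and $\BG=\SL_1(D)$ is an inner $\BQ$-form of $\SL_3$ which is \emph{anisotropic} over $\BQ$, precisely because $D$ is a division algebra. As $3$ is odd there is no division algebra of degree $3$ over $\BR$, so $D\otimes_{\BQ}\BR\cong M_3(\BR)$, whence $\BG(\BR)\cong\SL_3(\BR)=:G$. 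By Borel--Harish-Chandra and Godement's compactness criterion, $\Gamma=\SL_1(\OO_D)=\BG(\BZ)$ is a \emph{cocompact} arithmetic lattice in $G$.

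Next comes the key point: $\Gamma$ is a regular lattice. Let $\gamma\in\Gamma$ with $\gamma\neq 1$. The subfield $\BQ(\gamma)\subseteq D$ has degree over $\BQ$ dividing $\deg D=3$, and it cannot equal $\BQ$, since a central element of reduced norm $1$ in a degree-$3$ algebra satisfies $c^{3}=1$ with $c\in\BQ$, forcing $c=1$. Hence $[\BQ(\gamma):\BQ]=3$, i.e.\ $\BQ(\gamma)$ is a \emph{maximal} subfield of $D$; the minimal polynomial of $\gamma$ is then an irreducible, hence separable, cubic over $\BQ$, so over a splitting field $\gamma$ has three distinct eigenvalues and is a regular semisimple element of $\SL_3$. (It is the primality of $\deg D$ that forces every noncentral element to generate a maximal subfield; for composite degree this would fail.) Consequently every nontrivial element of $\Gamma$ lies in a unique maximal torus of $G$ — the norm-one subgroup attached to the field $\BQ(\gamma)$ — and $\Gamma$ is moreover torsion-free (a torsion element would generate a subfield $\BQ(\zeta_d)$ with $\varphi(d)\mid 3$, forcing $d\le 2$ and then $\gamma=1$). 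This is exactly the structure demanded of a regular lattice in the sense of Section~\ref{sec:regular}. I would then dispatch the (inessential, by the remark preceding Theorem~\ref{thm:O(2,2)}) net condition and check that $\SL_3(\BR)$ satisfies the hypothesis imposed on $G$ in Theorem~\ref{thm:regular}; that theorem then furnishes a proper subgroup of $\Gamma$ meeting every conjugacy class, so $\Gamma=\SL_1(\OO_D)$ is not IG.

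The real obstacle lies not in Theorem~\ref{thm2} itself but in the mechanism of Theorem~\ref{thm:regular}. Every finite group is invariably generated — Jordan's theorem guarantees a fixed-point-free element in every finite transitive action, which is the third of the equivalent conditions recalled in the introduction — so no finite quotient, and hence not the CSP, can obstruct invariable generation; the proper subgroup meeting every conjugacy class that one must construct is therefore necessarily non-normal and of infinite index. I would expect it to be assembled from the maximal tori $\Gamma\cap E^{\times}$, as $E$ ranges over maximal subfields of $D$ (equivalently, the norm-one unit groups of these fields), using the cyclic structure $D=(L/\BQ,\sigma,b)$ and a feature of the ambient group — for $\SL_3$, that a regular semisimple element need not be conjugate to its inverse since $-1\notin W(A_2)$, together with the transpose-inverse outer automorphism — so as to include enough of these tori, and enough of each torus, to cover every conjugacy class while remaining proper. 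Reconciling the largeness needed to cover all classes with the properness of the subgroup is the delicate point, and is precisely what Theorem~\ref{thm:regular} accomplishes.
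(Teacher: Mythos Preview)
Your reduction of Theorem~\ref{thm2} to Theorem~\ref{thm:regular} --- check that $\SL_3(\BR)$ with its standard representation satisfies Assumption~\ref{asmp:mani-pairs}, check that $\SL_1(\OO_D)$ is a regular lattice, and invoke Theorem~\ref{thm:regular} --- is exactly the paper's argument (see the remark immediately following Theorem~\ref{thm:regular}). Your verification that every nontrivial $\gamma$ generates a maximal cubic subfield of $D$, hence has irreducible minimal polynomial and three distinct eigenvalues, is in fact more explicit than the paper's bare ``by construction''. One small caveat: the paper's notion of \emph{regular} demands eigenvalues of pairwise distinct \emph{absolute value}, which is stronger than merely distinct eigenvalues; your argument does not address what happens when $\BQ(\gamma)$ has a complex place, in which case two eigenvalues share a modulus.

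Where your proposal diverges from the paper is the closing paragraph's guess at the mechanism behind Theorem~\ref{thm:regular}. The paper does \emph{not} assemble the witnessing subgroup from maximal tori $\Gamma\cap E^\times$, nor does it use $-1\notin W(A_2)$ or the transpose--inverse outer automorphism. Instead it builds an infinite-rank \emph{free} subgroup meeting every conjugacy class by a ping-pong argument on $\BP^{d-1}$: for each nontrivial conjugacy class one produces (Proposition~\ref{prop:main}, via Lemma~\ref{lem:the-conjugating} and Poincar\'e recurrence on $G/\Gamma$) a representative all of whose nonzero powers carry a fixed open set into a prescribed disjoint one, and then Proposition~\ref{prop:pingpong} shows these representatives freely generate. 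So your reduction step is correct and matches the paper, but your expectation of what lies behind Theorem~\ref{thm:regular} is quite different from the actual proof.
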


Theorem \ref{thm2} is a special case of the more general Theorem \ref{thm:regular}.

\medskip

%\subsection{Acknowledgment } The first author was partially supported by ISF-Moked grant 2095/15. The second author was partially supported by ISF grant 662/15.
%The reason for the assumption that $\gC$ is either a lattice or a dense subgroup is that in both cases we are able to approximate the dynamics of elements from $G$ by elements from $\gC$. Indeed, in the first case we apply Poincar\'{e}  recurrence theorem. 

%%%%%%%%%%%%%%%%%%

%\section{$\BR$-regular lattices}\label{sec:regular}
Suppose that $G=\BG(\BR)$ is a Zariski connected linear real algebraic group with a faithful irreducible representation in a $d$-dimensional real vector space, with associated projective 
space $\BP\cong\BP^{d-1}(\BR)$.
%$G=\BG(\BR)\le\GL_d(\BR)$ be a Zariski connected linear algebraic group. 
We identify $G$ with its image in $\GL_d(\BR)$. 
An element $g\in G$ is {\it regular} if it is either a regular unipotent or all its eigenvalues are of distinct absolute value (in particular each has multiplicity one). A subgroup $\gC\le G$ is regular if every nontrivial element of $\gC$ is regular.
For an element $g\in G$ we denote by $v_g$ and $H_g$ the attracting point and repelling hyperplane of $g$ in $\BP$, assuming they are unique. 
Let us say that a point or a hyperplane is $G$-{\it defined} if it is of the form $v_g$ (resp. $H_g$) for some $g\in G$. In addition let us say that an incident pair $(v,H)$ is $G$-defined if there is a regular element $g\in G$ for which $v=v_g$ and $H=H_{g^{-1}}$. Let us also say that an ordered pair $(U,V)$ of open neighborhoods in $\BP$ is $G$-{\it applicable} if there is a $G$-defined incident pair $(v,H)$ with $v\in U$ and $H$ entirely contained in the interior of the complement of $V$. 
We shall further suppose the following:

\begin{asmp}\label{asmp:mani-pairs}
The set of $G$-defined incident pairs is not contained in a proper sub-variety of the set of incident pairs.
\end{asmp}

% In this section we prove:

\begin{thm}\label{thm:regular}
Let $G$ be a Zariski connected linear real algebraic group with a faithful irreducible representation on a $d$-dimensional vector space
which satisfies Assumption \ref{asmp:mani-pairs} and let $\gC\le G$ be a regular lattice. 
Then $\gC$ admits an infinite rank free subgroup which intersects every conjugacy class. In particular, $\gC$ is not invariably generated.
\end{thm}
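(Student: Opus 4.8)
\medskip

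The plan is to enumerate the nontrivial conjugacy classes of $\gC$ and build, one at a time, a free generating set $\{x_1, x_2, \ldots\}$ with the following two properties: (i) for each $n$, some conjugate $x_n$ of a chosen representative of the $n$-th conjugacy class lies in our set; and (ii) the set is a ping-pong family in $\BP$, so that it freely generates and hence any subgroup it generates is free --- but since it meets every conjugacy class, the whole group $\gC$ is generated, forcing the free subgroup to have infinite rank and to intersect every conjugacy class. Concretely, I would maintain at stage $n$ a finite collection of pairwise disjoint open neighborhoods (or pairs of neighborhoods) $U_1, \dots, U_n$ in $\BP$, together with elements $x_i$ each of which ``plays ping-pong'' with respect to $U_i$: every nontrivial power of $x_i^{\pm 1}$ maps the complement of $U_i$ into $U_i$. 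The disjointness across $i$ then gives the table-tennis lemma and freeness of $\langle x_1, \dots, x_n\rangle$.

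\medskip

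The heart of the matter is the inductive step: given a new nontrivial conjugacy class with representative $\gc\in\gC$, and given that $U_1,\dots,U_{n-1}$ have already been fixed, I must produce a conjugate $x_n = \gc^{g}$ (with $g\in\gC$) and a small open set $U_n$, disjoint from the previous $U_i$, such that all nontrivial powers of $x_n^{\pm 1}$ contract $\BP\setminus U_n$ into $U_n$. This is exactly the ``unipotent-like dynamics'' advertised in the introduction. Here is where I would use the technical novelty: since $\gc$ is regular, it has a unique attracting point $v_{\gc}$ and repelling hyperplane $H_{\gc^{-1}}$ forming a $G$-defined incident pair (or $\gc$ is regular unipotent, in which case $v_\gc\in H_{\gc^{-1}}$ already and the contraction onto a neighborhood of $v_\gc$ works directly). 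When $\gc$ is $\BR$-regular but not unipotent, a fixed conjugate of it contracts $\BP\setminus(\text{nbhd of }H_{\gc^{-1}})$ toward $v_\gc$, but the exceptional neighborhood of $H_{\gc^{-1}}$ is large, not small; the trick is to conjugate within $G$ so that $v_\gc$ is pushed \emph{onto} its repelling hyperplane — i.e. to arrange an incident pair $(v,H)$ with $v\in H$ as a limit of $G$-defined incident pairs — making the dynamics degenerate to the unipotent-like picture where the bad set shrinks. Assumption \ref{asmp:mani-pairs} is precisely what guarantees that $G$-defined incident pairs are abundant enough (Zariski dense in the incidence variety) that we can realize such a configuration with the attracting point and the shrunk exceptional set landing in a prescribed tiny region $U_n$ disjoint from $U_1,\dots,U_{n-1}$.

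\medskip

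The final ingredient is the passage from $G$ back to the lattice $\gC$. The conjugation that produces the desired degenerate dynamics is a priori by an element $g\in G$, not in $\gC$. To replace it by a lattice element I would invoke the Poincaré recurrence theorem for the action of $\gC$ on $G/\gC$ (which has finite invariant volume since $\gC$ is a lattice): the $\gC$-orbit of the relevant point returns arbitrarily close to $g$, so there is $\gamma\in\gC$ with $\gamma$ close enough to $g$ that $x_n := \gc^{\gamma}$ still has its attracting point and exceptional set inside $U_n$ — contraction is an open condition, so a sufficiently good approximation suffices. Taking $U_n$ slightly smaller than needed at the outset absorbs the approximation error. I expect this recurrence-plus-openness step, together with bookkeeping to keep all the $U_i$ simultaneously disjoint throughout the induction, to be the main obstacle: one must choose the neighborhoods with uniform ``room to spare'' so that later stages never collide with earlier ones, and one must check that the $G$-element realizing the degenerate incidence pair can be taken in a compact region so that recurrence applies with controlled constants. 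Once the family $\{x_n\}$ is constructed, the table-tennis lemma finishes the proof: $\langle x_n : n\in\BN\rangle$ is free of infinite rank, contains a conjugate of every nontrivial element of $\gC$, hence meets every conjugacy class, so $\gC$ is not invariably generated.
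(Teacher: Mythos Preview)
Your overall architecture is right: enumerate conjugacy classes, manufacture a ping-pong family of representatives, conclude via the table-tennis lemma. But the central mechanism you describe for producing ``unipotent-like dynamics'' does not work, and it is not what the paper does.

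You propose to conjugate $\gc$ inside $G$ so as to push $v_\gc$ onto its repelling hyperplane. Conjugation acts diagonally on the eigenspace data: $(v_\gc,H_\gc,v_{\gc^{-1}},H_{\gc^{-1}})\mapsto (g\cdot v_\gc,g\cdot H_\gc,g\cdot v_{\gc^{-1}},g\cdot H_{\gc^{-1}})$, so all incidence relations among these four objects are conjugation-invariant. You cannot make $v_\gc$ land on $H_\gc$ by conjugating, and $v_\gc$ already lies on $H_{\gc^{-1}}$ for every regular $\gc$ (that is precisely why the paper calls $(v_\gc,H_{\gc^{-1}})$ an \emph{incident} pair). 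More fundamentally, even a genuine regular unipotent does not satisfy ``every nonzero power sends $\BP\setminus U$ into $U$'' for a small $U$: try $\bigl(\begin{smallmatrix}1&1\\0&1\end{smallmatrix}\bigr)$ on $\BP^1$. So degenerating to the unipotent picture would not by itself give you what you need.

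The paper's actual mechanism is different. One does \emph{not} try to control the eigenspace geometry of $\gc$ by conjugation. Instead one finds an auxiliary very proximal element $\gd\in\gC$ whose attracting/repelling data are well placed relative both to the target sets $(U,V)$ \emph{and} to the $\langle\gc'\rangle$-orbit of $v_{\gd^{-1}}$; then one conjugates by a \emph{high power} $\gd^k$. The composite $\gd^k\gc'^{\,n}\gd^{-k}$ works for all $n\ne 0$ because $\gd^{-k}$ first crushes $V$ into a tiny wandering neighborhood $O$ of $v_{\gd^{-1}}$, the $\gc'$-orbit of $O$ stays uniformly away from $H_\gd$ (this is the role of conditions (2),(3),(5) in the key lemma), and then $\gd^k$ crushes everything off $H_\gd$ into $U$. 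Assumption~\ref{asmp:mani-pairs} is used to place $(v_\gd,H_{\gd^{-1}})$ as required by the pair $(U,V)$, not to degenerate $\gc$.

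Your use of Poincar\'e recurrence is also off: recurrence on $G/\gC$ does not give a lattice element close to a prescribed $g\in G$; it gives, for some large $m$, an element of $\gC\cap \gO g^m\gO$. That is exactly what is needed to produce $\gd$ with the \emph{same} attracting/repelling data as $g$ but arbitrarily strong contraction, and it is how the paper finds $\gd$. A minor structural point: the paper fixes the whole disjoint family $\{U_k\}$ in advance (so that each $(U_k,\bigcup_{i\ne k}U_i)$ is $G$-applicable) rather than building it inductively; this sidesteps the bookkeeping worries you flag at the end.
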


\begin{rem}
For $G=\SL_d(\BR)$ with its standard $d$-dimensional representation Assumption \ref{asmp:mani-pairs} is obviously satisfied. Moreover it is easy to check that the lattices in Theorem  \ref{thm2} are regular, by construction. Thus Theorem \ref{thm2} is a special case of Theorem \ref{thm:regular}. 
\end{rem}

%To set up notations, let us suppose that  

%Let $\gC\le G$ be a regular lattice.

%\subsection{The proof of Theorem \ref{thm:regular}}\label{sec:regular}

\begin{prop}[Main Proposition]\label{prop:main}
Let $C$ be a nontrivial conjugacy class in $\gC$, and let $(U,V)$ be a $G$-applicable pair of open sets in $\BP$. Then there is $\gc\in C$ such that $\gc^n\cdot V,~n\in\BZ\setminus\{ 0\}$ are all disjoint and contained in $U$.
\end{prop}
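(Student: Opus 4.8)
The plan is to first solve the problem inside the ambient group $G$ — where we have freedom to pick elements with any prescribed attracting point and repelling hyperplane — and then to push the solution back into the lattice $\gC$ via Poincaré recurrence. Fix a nontrivial $\gc_0\in C$ and a $G$-applicable pair $(U,V)$, witnessed by a $G$-defined incident pair $(v,H)$ with $v\in U$ and $H$ contained in the interior of $\BP\setminus V$. The key observation is that if $g\in G$ is a regular element with attracting point $v_g$ very close to $v$ and repelling hyperplane $H_{g^{-1}}$ very close to $H$, and with a large enough ``gap'' between its top two eigenvalue moduli (so that its contraction towards $v_g$ is fast), then for a suitable conjugate $\gc=h\gc_0 h^{-1}$ the element $\gc$ will inherit, up to small error, the North–South dynamics of $g$: every nontrivial power $\gc^n$ maps the complement of a small neighborhood of $H_{g^{-1}}$ — in particular all of $V$ — into a small neighborhood of $v_g$ inside $U$, and moreover $\gc^n\cdot V$ for distinct $n$ land in disjoint pieces clustering near $v_g$. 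This is exactly the ``simulating unipotent dynamics'' mechanism advertised in the introduction: I expect one shows that for a regular (semisimple or unipotent) element, conjugating by an element that moves its eigen-data close to $(v,H)$, and choosing the element deep enough in a one-parameter-like direction, forces $\bigcup_{n\neq 0}\gc^n V$ to be a disjoint union inside $U$.

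The second, and I expect harder, step is that the conjugator $h$ and the auxiliary element must be taken inside $\gC$, not merely in $G$. Here one uses that $\gC$ is a lattice, so $\gC\backslash G$ has finite volume, together with Poincaré recurrence for the action of a suitable one-parameter subgroup (or the cyclic group generated by an appropriate $g$) on $\gC\backslash G$. Concretely: having produced in $G$ an element $g$ and a conjugator realizing the desired ping-pong configuration with some room to spare, one notes that the property ``$\gc^n V$ are pairwise disjoint and contained in $U$'' is an open condition on the relevant parameters; then a recurrence/density argument supplies a genuine element of $\gC$ in the conjugacy class $C$ satisfying it. Assumption \ref{asmp:mani-pairs} enters precisely to guarantee that the $G$-defined incident pairs $(v_g,H_{g^{-1}})$ are plentiful enough — Zariski dense in the incidence variety — so that for \emph{any} given $G$-applicable $(U,V)$ we can find regular elements of $G$ with the needed dynamical data, and with the freedom to also control the eigenvalue gaps.

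The main obstacle, as I see it, is making the recurrence argument interact correctly with the requirement that $\gc$ lie in the prescribed conjugacy class $C$: recurrence naturally produces elements close to a given one in $\gC\backslash G$, but we need the element itself (conjugate of $\gc_0$) in $\gC$, with its dynamics controlled uniformly over \emph{all} $n\in\BZ\setminus\{0\}$ simultaneously. Controlling infinitely many powers at once is delicate; I expect this is handled by first arranging in $G$ a uniform contraction estimate — something like $\gc$ contracting a fixed neighborhood of $\BP\setminus V$ strictly inside a fixed small ball around $v_g$ — so that the disjointness and containment for all nonzero powers follow from a single, robust (open) estimate that survives the perturbation into $\gC$. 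The regularity hypothesis on $\gC$ is what ensures every $\gc\in C$ is either $\BR$-regular semisimple or regular unipotent, so that ``attracting point'' and ``repelling hyperplane'' make sense and the North–South picture applies without degenerate eigenvalue coincidences.
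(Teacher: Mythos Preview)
Your high-level plan --- solve in $G$, then approximate in $\gC$ via Poincar\'e recurrence --- matches the paper, but the mechanism you describe has a genuine gap. You write that a conjugate $\gc=h\gc_0h^{-1}$ will ``inherit, up to small error, the North--South dynamics of $g$'' provided $g$ has a large spectral gap. But conjugation preserves eigenvalues: if $\gc_0$ happens to have a tiny ratio between its top two eigenvalue moduli (or is regular unipotent), then so does every conjugate, and no choice of $h$ will make $\gc$ itself strongly proximal. So the desired containment $\gc^n\cdot V\subset U$ for \emph{all} $n\ne 0$ cannot come from North--South dynamics of $\gc$ as you suggest; in particular your proposed ``single robust open contraction estimate'' on $\gc$ cannot be uniform in $n$ for this reason.

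What the paper actually does is different in a crucial way. The recurrence argument is used not to produce the conjugator directly, but to produce an auxiliary element $\gd\in\gC$ (approximating a high power $g^m$) whose \emph{own} dynamics are strongly proximal, with $v_\gd\in U$ and $H_{\gd^{-1}}$ disjoint from $\overline V$. One then sets $\gc=\gd^k\gc'\gd^{-k}$ for $\gc'\in C$ and $k$ large. The point is that $\gd^{-k}$ first squeezes $V$ into a tiny neighbourhood $O$ of $v_{\gd^{-1}}$; then $\gc'^n$ moves $O$ around; then $\gd^k$ sends the result into $U$. For this last step to work for every $n$ one needs the entire $\langle\gc'\rangle$-orbit of $v_{\gd^{-1}}$ to stay away from $H_\gd$; this is a countable family of closed conditions on the data of $\gd$ (equivalently of $g$), and it is here --- not in any contraction estimate --- that Assumption~\ref{asmp:mani-pairs} is really used, via a Baire category argument in the analytic group $G$. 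Your outline does not contain this idea, and without it the control over infinitely many powers does not go through.
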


%To prove the proposition we shall need the following technical lemma:

\begin{lem}\label{lem:the-conjugating}
Let $\gc\in \gC\setminus\{1\}$, and let $(U,V)$ be a $G$-applicable pair of open sets in $\BP$.  Then there is $\gd\in\gC$ such that 
\begin{enumerate}
\item $H_{\gd^{-1}}\cap \overline{V}=\emptyset$,
\item $v_\gc,v_{\gc^{-1}}\notin H_\gd$, 
\item $v_{\gd^{-1}}\notin H_\gc\cup H_{\gc^{-1}}$,
\item $v_\gd\in U$, and
\item $\gc^n\cdot v_{\gd^{-1}}\notin H_\gd$ for all $n\ne 0$.
\end{enumerate}
\end{lem}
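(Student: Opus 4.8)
The plan is to produce $\gd$ as a ``generic'' element of $\gC$ whose attracting/repelling data is in general position relative to the finitely many constraints (1)--(5). The key point is that all five conditions are Zariski-open constraints on the relevant data, so it suffices to exhibit a single element of the enveloping group $G$ (not necessarily in $\gC$) satisfying them, and then to transfer the solution into $\gC$ by a density/recurrence argument. Concretely, since $(U,V)$ is $G$-applicable, fix a $G$-defined incident pair $(v,H)$ with $v\in U$ and $H$ in the interior of the complement of $V$; let $g_0\in G$ be a regular element with $v_{g_0}=v$, $H_{g_0^{-1}}=H$. Conditions (1) and (4) already hold for $g_0$ by construction (after possibly passing to a small neighbourhood, $\overline V$ is still disjoint from $H_{g_0^{-1}}$). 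Conditions (2), (3) and (5) are the ones that require moving $g_0$ within $G$.

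Here is how I would arrange (2), (3), (5). Using Assumption \ref{asmp:mani-pairs}, the set of $G$-defined incident pairs is Zariski-dense in the incidence variety; thus I can choose $(v_\gd, H_\gd)=(v_{g},H_{g^{-1}})$ for $g\in G$ ranging over a Zariski-dense subset. Conditions (2), $v_\gc,v_{\gc^{-1}}\notin H_\gd$, and (3), $v_{\gd^{-1}}\notin H_\gc\cup H_{\gc^{-1}}$, each exclude a proper subvariety of the incidence variety (since $v_\gc$, $v_{\gc^{-1}}$, $H_\gc$, $H_{\gc^{-1}}$ are fixed), so a generic $G$-defined incident pair avoids them. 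Condition (5), $\gc^n v_{\gd^{-1}}\notin H_\gd$ for all $n\ne 0$, is the subtlest: for each fixed $n$ this again excludes a proper subvariety, but there are infinitely many $n$. To handle this I would use that $\gc$ is $\BR$-regular: the orbit $\{\gc^n v_{\gd^{-1}}\}_{n\ne 0}$ accumulates only on $v_\gc$ and $v_{\gc^{-1}}$, so once $v_{\gd^{-1}}\notin H_\gc\cup H_{\gc^{-1}}$ (condition (3)) and $v_\gc,v_{\gc^{-1}}\notin H_\gd$ (condition (2)) hold, only finitely many of the points $\gc^n v_{\gd^{-1}}$ can possibly lie near $H_\gd$; those finitely many give finitely many further proper subvarieties to avoid. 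Hence the full set of constraints (1)--(5) is satisfied by a nonempty Zariski-open (hence Zariski-dense, and in particular nonempty in any dense subset) set of $G$-defined incident pairs.

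The remaining issue is that $\gd$ must lie in the \emph{lattice} $\gC$, not merely in $G$. This is where the ``Poincar\'e recurrence'' idea mentioned in the introduction enters. I would fix $g\in G$ realizing (1)--(5) with room to spare (all the conditions being open, I can demand that the relevant points stay a definite distance away from the relevant hyperplanes). The maps $g\mapsto (v_g, H_{g^{-1}}, v_{g^{-1}}, H_g)$ and $g\mapsto (\gc^n v_{g^{-1}})$ are continuous, so there is an open neighbourhood $\mathcal{O}\ni g$ in $G$ such that every $g'\in\mathcal O$ still satisfies (1)--(5). Since $\gC$ is a lattice, $\gC\mathcal O$ has positive Haar measure / $\gC$ is ``syndetic enough'' — more precisely, using that $G=\gC\cdot F$ for some finite-measure fundamental domain and applying recurrence along the one-parameter data, one finds $\gd\in\gC\cap \mathcal O g''$ for a suitable translate; after adjusting by the continuity neighbourhood this yields an honest $\gd\in\gC$ satisfying all five conditions. (Alternatively, if $\gC$ is Zariski-dense in $\BG$ — which it is, being a lattice — one can often get away with a direct Zariski-density argument, but openness of the conditions plus recurrence is the robust route.)

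The main obstacle I expect is precisely the last step: conditions (1)--(5) are \emph{open} conditions on an element of $G$, and openness alone does not let one find a lattice point satisfying them, because $\gC$ is discrete. The technical heart is therefore the recurrence argument that upgrades ``solvable in $G$'' to ``solvable in $\gC$'', and getting (5) — an infinite family of constraints — under control simultaneously via the $\BR$-regularity of $\gc$ (so that the orbit $\gc^n v_{\gd^{-1}}$ has only two accumulation points) rather than trying to avoid infinitely many subvarieties at once. Everything else — that (1)--(4) cut out nonempty Zariski-open subsets of the incidence variety, using Assumption \ref{asmp:mani-pairs} — is routine algebraic geometry of the incidence correspondence.
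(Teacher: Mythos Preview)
Your overall architecture matches the paper exactly: first produce a regular $g\in G$ satisfying (1)--(5), then upgrade to $\gd\in\gC$ via Poincar\'e recurrence. Your treatment of (1)--(4) as generic open/Zariski-open conditions is correct, and your reduction of (5) to finitely many constraints via the accumulation of $\gc^n v_{g^{-1}}$ at $v_{\gc^{\pm1}}$ is essentially how the paper obtains the robust $\gep$-fattened version of (5) as well (the paper first handles (5) for $g$ itself by Baire category and analyticity, then uses the accumulation argument to make it stable).

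The genuine gap is in the transfer to $\gC$. Your formulation --- take an open neighbourhood $\mathcal O\ni g$ on which (1)--(5) persist and then find $\gd\in\gC\cap\mathcal O g''$ --- does not work: $\gC$ is discrete, so there is no reason for $\gC$ to meet $\mathcal O$, and translating $\mathcal O$ by some $g''$ moves you out of the region where (1)--(5) hold. Zariski-density of $\gC$ is likewise useless here, since (1) and (4) are Euclidean-open, not Zariski-open. What is missing is the observation that conditions (1)--(5) depend only on the quadruple $(v_\gd,H_\gd,v_{\gd^{-1}},H_{\gd^{-1}})$, and this quadruple is \emph{unchanged} when $g$ is replaced by any positive power $g^m$. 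For $m$ large, $g^m$ becomes arbitrarily proximal, so that for a fixed small symmetric identity neighbourhood $\gO$ every element of the two-sided coset $\gO g^m\gO$ is itself very proximal with attracting/repelling data lying in $\gO\cdot(v_{g^{\pm1}})_\gep$ and $\gO\cdot(H_{g^{\pm1}})_\gep$; this is where the ``room to spare'' must be invested. Poincar\'e recurrence for the $g$-action on the finite-measure space $G/\gC$ then gives $g^m\pi(\gO)\cap\pi(\gO)\ne\emptyset$ for arbitrarily large $m$, i.e.\ $\gC\cap\gO g^m\gO\ne\emptyset$, and any such element is the desired $\gd$. In short, recurrence is applied to the \emph{cyclic flow} $m\mapsto g^m$ acting on $G/\gC$, and the target is $\gO g^m\gO$ (a neighbourhood of a high power), not a neighbourhood of $g$ itself.
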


\begin{proof}
%Poincar\'{e}  recurrence.
As a first step, observe that there exists a regular $g\in G$ such that all the five conditions hold with $g$ instead of $\gd$. Indeed, as $(U,V)$ is $G$-applicable there is $g$ for which $(1)$ and $(4)$ are satisfied. 
Moreover, in view of Assumption \ref{asmp:mani-pairs}, for every $n\in\BN\setminus\{0\}$ the set of regular $g\in G$ such that 
$\gc^n\cdot v_{g^{-1}}\in H_g$ is nowhere-dense. Hence by the Baire category theorem and the fact that $G$ admits an analytic structure, we may slightly deform $g$ in order to guarantee that condition $(5)$ holds as well.
Since $G$ is Zariski connected and irreducible, the set of $h\in G$ such that either $(2)$ or $(3)$ do not hold for $g^h$ is nowhere-dense. Thus, up to replacing $g$ by a conjugate $g^h$ with $h$ sufficiently close to $1$, all the five conditions are satisfied.

Next note that $\gc^n\cdot v_{g^{-1}}$ tends to $v_\gc$ when $n\to\infty$ and to $v_{\gc^{-1}}$ when $n\to-\infty$. Thus if we pick a sufficiently small symmetric identity neighborhood $\gO\subset G$ and $\gep>0$ sufficiently small, then we have:
\begin{enumerate}
\item $\gO\cdot (H_{g^{-1}})_\gep\cap \overline{V}=\emptyset$,
\item $v_\gc,v_{\gc^{-1}}\notin \gO\cdot (H_g)_\gep$, 
\item $\gO\cdot (v_{g^{-1}})_\gep\cap (H_\gc\cup H_{\gc^{-1}})=\emptyset$,
\item $\gO\cdot (v_g)_\gep\subset U$, 
\item $\gc^n\gO\cdot (v_{g^{-1}})_\gep\cap \gO\cdot (H_g)_\gep=\emptyset$ for all $n\ne 0$, and
\item $\gO\cdot (v_g)_\gep\cap\gO\cdot (H_g)_\gep=\emptyset=\gO\cdot (v_{g^{-1}})_\gep\cap\gO\cdot (H_{g^{-1}})_\gep$.
\end{enumerate}

Finally we wish to replace $g$ by an element $\gd\in \gC$ with similar dynamical properties. 
%In case $\gC$ is dense in $G$ we may clearly pick any $\gd\in\gC$ as long as it is sufficiently close to $g$. Thus we may restrict to he second case where 
Since $\gC$ is a lattice, $G/\gC$ carries a $G$-invariant probability measure. Let $\pi:G\to G/\gC$ denote the quotient map. By the Poincar\'{e}  recurrence theorem we have that for arbitrarily large $m$, $g^m\cdot\pi(\gO)\cap\pi(\gO)\ne\emptyset$, which translates to $\gO g^m\gO\cap\gC\ne\emptyset$, since $\gO$ was chosen to be symmetric. 
 
Note that replacing $g$ by a power $g^m,~m>0$ does not change the attracting points and repelling hyperplanes, while for larger and larger $m$ the element $g^m$ is becoming more and more proximal. In particular, if $m$ is sufficiently large then 
%\begin{equation}
$$
 g^m\cdot (\BP\setminus (H_g)_\gep)\subset (v_g)_\gep,~\text{and}~g^{-m}\cdot (\BP\setminus (H_{g^{-1}})_\gep)\subset (v_{g^{-1}})_\gep.
$$
%\end{equation}
In view of Item (6), this implies that any element of the form $h=w_1g^mw_2$ with $w_1,w_2\in\gO$ is very proximal and satisfies 
$$
 v_h\in\gO\cdot(v_g)_\gep,~v_{h^{-1}}\in\gO\cdot(v_{g^{-1}})_\gep,~H_h\subset\gO\cdot (H_g)_\gep,~H_{h^{-1}}\subset\gO\cdot (H_{g^{-1}})_\gep.
$$
In particular, we obtain the desired $\gd$ by choosing any element from the nonempty set $\gC\cap\gO g^m\gO$.
\end{proof}
%
%By replacing $\gd$ with $\gd^m$ for a large $m$, we obtain that:
%
%\begin{cor}\label{cor:regular}
%Under the assumptions of Lemma \ref{lem:the-conjugating}, there is $\ga\in\gC$ such that 
%the sets $(\gc^{\ga})^n\cdot V,~n\in\BZ\setminus\{ 0\}$ are all disjoint and contained in $U$. 
%\end{cor}

\begin{proof}[Proof of Proposition \ref{prop:main}]
Pick $\gc'$ in $C$ and let $\gd$ be the element provided by Lemma \ref{lem:the-conjugating}. Items (2,3,5) of the lemma together with the contraction property of high powers of $\gc'$ guarantee that if $O$ is a sufficiently small neighborhood $v_{\gd^{-1}}$ then its $\langle\gc'\rangle$ orbit stays away from $H_\gd$, i.e. there is a neighborhood $W$ of $H_\gd$ disjoint from all $\gc'^n\cdot O,~n\in\BZ\setminus\{0\}$. In particular the $\langle\gc'\rangle$ translates of $O$ are all disjoint, i.e. $O$ is $\langle\gc'\rangle$-wondering (in the terminology of \cite{Gel}). If $k$ is sufficiently large then by Item (1) of the lemma $\gd^{-k}\cdot V\subset O$ while by Item (4), $\gd^k\cdot (\BP\setminus W)\subset U$. It follows that $\gc=\gd^k\gc'\gd^{-k}$ is our desired element.

\end{proof}

We shall also require the following:

\begin{lem}\label{lem:(U,V)-pairs}
There is a countable family of disjoint open sets $U_k,~k\in\BN$ such that if we let $V_k=\bigcup_{i\ne k}U_i$ for any $k\in\BN$ then the pairs $(U_k,V_k),~k\in\BN$
are all $G$-applicable.
\end{lem}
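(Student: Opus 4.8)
\emph{Plan.} Write $\mathcal F$ for the variety of incident pairs $(v,H)$ — a point $v\in\BP$ lying on a hyperplane $H$ — and $\mathcal D\subseteq\mathcal F$ for the set of $G$-defined incident pairs. The plan is to build a sequence $(v_k,H_k)\in\mathcal D$, $k\in\BN$, converging in $\mathcal F$ to a pair $(v_\infty,H_\infty)$ and in ``general position'', meaning $v_i\notin H_j$ for all $i\ne j$, $v_\infty\notin H_k$ for all $k$, and $v_k\notin H_\infty$ for all $k$; then $U_k$ will be a small ball around $v_k$, and $(v_k,H_k)$ itself will witness $G$-applicability of $(U_k,V_k)$. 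The first point to record is that $\mathcal D$ has nonempty interior in $\mathcal F$. The assignment $g\mapsto(v_g,H_{g^{-1}})$ is a semialgebraic map on the semialgebraic set of regular elements of $G$: for a regular non-unipotent element the eigenvalues of largest and smallest modulus are real and simple, so $v_g$ and $H_{g^{-1}}$ are cut out by semialgebraic formulas, while regular unipotents are handled directly. Hence $\mathcal D$ is semialgebraic by Tarski--Seidenberg. By Assumption \ref{asmp:mani-pairs} it is Zariski dense in the irreducible smooth variety $\mathcal F$, so $\dim\mathcal D=\dim\mathcal F$, and a full-dimensional semialgebraic subset of a smooth variety has nonempty interior; fix a nonempty Euclidean-open $\mathcal O\subseteq\mathcal D$.

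\emph{Construction of the sequence.} Fix $(v_\infty,H_\infty)\in\mathcal O$ and construct the $(v_k,H_k)\in\mathcal O$ recursively. For each point $w\in\BP$ the set $\{(v,H)\in\mathcal F: w\in H\}$ is a proper subvariety of $\mathcal F$, and for each hyperplane $L$ so is $\{(v,H)\in\mathcal F: v\in L\}$ (using $d\ge 2$). Having chosen $(v_j,H_j)$ for $j<k$, the union of the finitely many proper subvarieties $\{(v,H): w\in H\}$ for $w\in\{v_\infty,v_0,\dots,v_{k-1}\}$ and $\{(v,H): v\in L\}$ for $L\in\{H_\infty,H_0,\dots,H_{k-1}\}$ does not contain the nonempty open set $\mathcal O\cap B\bigl((v_\infty,H_\infty),2^{-k}\bigr)$, since $\mathcal F$ is irreducible; choose $(v_k,H_k)$ in that ball outside all of them. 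This yields $(v_k,H_k)\to(v_\infty,H_\infty)$ together with the three general-position properties; in particular the $v_k$ are pairwise distinct and distinct from $v_\infty$, because $v_j\in H_j$ and $v_\infty\in H_\infty$.

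\emph{The sets $U_k$ and verification.} For each $k$ the set $F_k:=H_\infty\cup\bigcup_{j\ne k}H_j$ is closed — its only possible new limit points lie on $H_\infty$ — and does not contain $v_k$ by the previous step. Choose open balls $U_k\ni v_k$ with $\overline{U_k}\cap F_k=\emptyset$, small enough to be pairwise disjoint and with $\diam U_k\to 0$; this is possible since the distance from $v_k$ to the closed set $\{v_j: j\ne k\}\cup\{v_\infty\}$ is positive and tends to $0$. Because $v_k\to v_\infty$, the shrinking diameters force $\overline{\bigcup_i U_i}=\{v_\infty\}\cup\bigcup_i\overline{U_i}$, hence $\overline{V_k}=\{v_\infty\}\cup\bigcup_{i\ne k}\overline{U_i}$ for every $k$. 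Now $v_\infty\notin H_k$, and $\overline{U_i}\cap H_k=\emptyset$ for $i\ne k$ because $H_k\subseteq F_i$; therefore $H_k\cap\overline{V_k}=\emptyset$, i.e.\ $H_k$ lies in the interior of the complement of $V_k$. Together with $v_k\in U_k$ and $(v_k,H_k)\in\mathcal D$, this shows $(U_k,V_k)$ is $G$-applicable.

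\emph{Main difficulty.} The only step needing real care is the opening claim that $\mathcal D$ has nonempty Euclidean interior: passing from ``$\mathcal D$ is not contained in a proper subvariety'' (a Zariski-density statement) to a topological-openness statement requires the semialgebraicity of the parametrization $g\mapsto(v_g,H_{g^{-1}})$, so that the Zariski and topological dimensions of $\mathcal D$ coincide and a full-dimensional semialgebraic set is forced to contain an open set. Once that is in hand, the remainder is a routine general-position recursion against finitely many proper subvarieties of the irreducible variety $\mathcal F$, plus the elementary bookkeeping about closures of shrinking balls around a convergent sequence of points.
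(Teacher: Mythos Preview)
Your proof is correct and shares the paper's overall architecture: produce a sequence of $G$-defined incident pairs $(v_k,H_k)$ converging to some $(v_\infty,H_\infty)$ and in general position ($v_i\notin H_j$ for $i\ne j\le\infty$), then take small balls $U_k\ni v_k$. The difference lies in how one obtains an abundant supply of $G$-defined pairs near the limit. The paper fixes a single regular $g$ with pair $(v,H)$, chooses $h\in G$ close to $1$ with $h\cdot v\notin H$ and $v\notin h\cdot H$, and runs an \emph{analytic} curve $c:[0,1]\to G$ to $h$; then the pairs $(c(t)\cdot v,c(t)\cdot H)$ are all $G$-defined (they come from conjugates of $g$), and analyticity of the curve lets one pick $t_k\to 1$ avoiding the countably many incidence conditions. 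You instead argue that the set $\mathcal D$ of $G$-defined pairs has nonempty Euclidean interior in $\mathcal F$, via Tarski--Seidenberg and the dimension theory of semialgebraic sets, and then run a Baire-type recursion inside that open set. The paper's route is lighter on machinery—one only needs a single regular element and the $G$-action—whereas your route trades the analytic-curve step for a clean general-position recursion once the interior statement is in hand, and makes no explicit use of the group action after the opening paragraph. Both arrive at the same endgame bookkeeping about $\overline{V_k}$ and $H_k$.
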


\begin{proof}
Let $g\in G$ be a regular element and set $(v,H)=(v_g,H_g)$. Pick $h\in G$ near the identity such that $h\cdot v\notin H$ and $v\notin h\cdot H$. Let $c:[0,1]\to G$ be an analytic curve with $c(0)=g$ and $c(1)=h$. Since the curve $c$ is analytic, the boundary conditions allow us to chose a sequence $t_k\to 1$ such that if we let $(v_k,H_k)=(c(t_k)\cdot v,c(t_k)\cdot H)$ and $(v_\infty,H_\infty)=(h\cdot v,h\cdot H)$, then
for all $i\ne j\le\infty$ we have $v_i\notin H_j$. In addition, the set of pairs 
$\{(v_k,H_k):k<\infty\}$ is discrete and accumulates to $(v_\infty,H_\infty)$. Thus, we may choose neighborhoods $U_k\supset v_k$ sufficiently small so that $\overline{U_i}\cap H_j=\emptyset,~\forall i\ne j$ and so that the only point in $\overline{\cup U_k}\setminus\cup\overline{U_k}$ is $h\cdot v$. It is evident that the sets $U_k,~k\in\BN$ satisfy the required property.
\end{proof}

The strategy of the proof of Theorem \ref{thm:regular} is as follows.  Let $(U_k,V_k),~k\in\BN$ be as in Lemma \ref{lem:(U,V)-pairs}.
Let $C_k$ be an enumeration of the non-trivial conjugacy classes in $\gC$. In view of Proposition \ref{prop:main}, we may pick $\gc_k\in C_k$ so that 
$$
 \gc_k^n\cdot V_k\subset U_k,~\forall n\in\BZ\setminus \{0\}.
$$

We derive Theorem \ref{thm:regular} from  
the following variant of the ping-pong lemma (cf.\ \cite[Proposition 3.4]{Gel}:
% (cf. \cite[Proposition 3.4]{Gel}):

\begin{prop}
Let $\gC$ be a group and $X$ a compact $\gC$-space. Let $\gc_k\in \gC,~k\in\BN$ be elements of $\gC$, and suppose there are disjoint subsets $U_k\subset X$ such that for all $k\ne j$ and all $n\in\BZ\setminus\{0\}$ we have $\gc_k^n\cdot U_j\subset U_k$. Then 
$$
 \gD=\langle \gc_k,~k=1,2,3,\ldots\rangle=*_k\langle\gc_k\rangle,
$$
is the free product of the infinite cyclic groups $\langle\gc_k\rangle,k=1,2,3,\ldots$.

Furthermore, the limit set $L(\gD)$ is contained in $\overline{\cup_k U_k}$.
\end{prop}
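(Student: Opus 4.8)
The plan is to run the classical ping-pong (``table tennis'') lemma for free products, in the version allowing infinitely many infinite-cyclic factors, and then to extract the limit-set statement from a one-sided version of the very same estimate. Throughout one may harmlessly assume that every $U_k$ is nonempty: if some $U_j$ were empty, then applying the hypothesis with any $k\neq j$ would force all the $U_i$ to be empty, and there would be nothing to prove. (Compactness of $X$ will be used only for the last assertion, to ensure that orbit accumulation points exist.)

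First I would check that each cyclic group $\langle\gc_k\rangle$ is infinite: if $\gc_k^n=1$ for some $n\neq 0$, then for any $j\neq k$ we would get $U_j=\gc_k^n\cdot U_j\subseteq U_k$, contradicting $U_j\cap U_k=\emptyset$. To identify $\gD$ with the free product it then suffices to show that the canonical surjection $*_k\langle\gc_k\rangle\to\gD$ is injective, i.e.\ that every nonempty reduced word $w=\gc_{k_1}^{n_1}\gc_{k_2}^{n_2}\cdots\gc_{k_r}^{n_r}$ (with $k_i\neq k_{i+1}$ and $n_i\neq 0$) acts nontrivially on $X$. Since the index set $\BN$ is infinite, choose $j\notin\{k_1,k_r\}$ and telescope the hypothesis from the right:
$$
\gc_{k_r}^{n_r}\cdot U_j\subseteq U_{k_r},\qquad \gc_{k_{r-1}}^{n_{r-1}}\cdot U_{k_r}\subseteq U_{k_{r-1}},\qquad\ldots,\qquad \gc_{k_1}^{n_1}\cdot U_{k_2}\subseteq U_{k_1},
$$
each inclusion being an instance of the hypothesis because consecutive indices differ and $j\neq k_r$. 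Hence $w\cdot U_j\subseteq U_{k_1}$, and since $U_j$ is nonempty and disjoint from $U_{k_1}$, $w$ cannot act as the identity. This gives $\gD=*_k\langle\gc_k\rangle$.

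For the limit set I would first isolate the one-sided estimate implicit above: if $\gamma=\gc_{k_1}^{n_1}\cdots\gc_{k_r}^{n_r}$ is reduced and $y\in U_j$ with $j\neq k_r$, then $\gamma\cdot y\in U_{k_1}\subseteq\bigcup_k U_k$. Now fix a basepoint $x_0\in U_{j_0}$ and let $x=\lim_i\gamma_i\cdot x_0$ be a limit point, the $\gamma_i\in\gD$ being pairwise distinct. If infinitely many $\gamma_i$ have reduced form not ending in a nonzero power of $\gc_{j_0}$, the estimate places those $\gamma_i\cdot x_0$ in $\bigcup_k U_k$, so $x\in\overline{\bigcup_k U_k}$. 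The remaining possibility is that all but finitely many $\gamma_i$ end in $\gc_{j_0}^{m_i}$ with $m_i\neq 0$; this case I would handle either by peeling off that last block (writing $\gamma_i=\delta_i\gc_{j_0}^{m_i}$, passing to a subsequence along which $\gc_{j_0}^{m_i}\cdot x_0$ converges, and then applying the estimate to $\delta_i$), or, more cleanly, by repeating the whole argument with a second basepoint in some $U_{j_1}$ with $j_1\neq j_0$, since no reduced word ends simultaneously in a power of $\gc_{j_0}$ and in a power of $\gc_{j_1}$. This should yield $x\in\overline{\bigcup_k U_k}$ in all cases, hence $L(\gD)\subseteq\overline{\bigcup_k U_k}$.

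I expect the free product half of the statement to be completely routine. The point that calls for real care is the last one: the hypothesis controls $\gc_k^n$ only on the sets $U_j$ with $j\neq k$ and gives no information about its action on $U_k$ itself, so one cannot fix a single ``ping-pong partner'' index once and for all --- the auxiliary index $j$ (and, in the limit-set part, possibly the basepoint) must be chosen depending on the reduced word, or on the sequence, at hand. Ensuring the bookkeeping still closes up in the case where the approximating elements $\gamma_i$ persistently end in powers of one fixed generator is the only genuinely delicate step.
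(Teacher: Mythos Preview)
The paper does not actually prove this proposition: it is recorded as ``a variant of the ping-pong lemma'' with a reference to \cite[Proposition~3.4]{Gel}, and no argument is given. Moreover, only the free-product half is ever used in the paper (in the proofs of Theorems~\ref{thm:regular} and~\ref{thm:O(2,2)}); the limit-set clause is stated but not invoked.

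Your argument for the free-product half is the standard one and is correct. One small quibble: if every $U_k$ were empty the hypothesis would be vacuous while the conclusion would generally fail, so ``nothing to prove'' is not literally right; but in the paper's applications the $U_k$ are nonempty open sets by construction, so this edge case is irrelevant.

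For the limit-set clause you correctly isolate the only real difficulty, but neither of your proposed fixes closes it. In approach~(a), after peeling off the trailing block and passing to a subsequence with $\gc_{j_0}^{m_i}\cdot x_0\to z$, you have no information whatsoever about $z$: the hypothesis controls $\gc_{j_0}^{m_i}$ only on the sets $U_j$ with $j\neq j_0$, and $x_0\in U_{j_0}$, so $z$ need not lie in any $U_j$ and the telescoping estimate cannot be applied to $\delta_i$ acting near $z$. In approach~(b), switching to a basepoint $x_1\in U_{j_1}$ does show that $\gamma_i\cdot x_1\in\bigcup_k U_k$ for the same sequence $(\gamma_i)$, but the point $x$ you are trying to capture is the limit of $\gamma_i\cdot x_0$, not of $\gamma_i\cdot x_1$; you would need to know that $L(\gD)$ is basepoint-independent, and for an arbitrary compact $\gC$-space this is not automatic and is not something you have established (the paper does not even define $L(\gD)$ in this generality). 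Since the clause is never used, this gap does not affect the paper's theorems, but if you want a self-contained statement you should either fix a definition of $L(\gD)$ for which basepoint-independence is built in, or restrict to a setting (such as the convergence-group setting of \cite{Gel}) where it is known.
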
 
\qed

%%%%%%%%%%%%%%%%%%%%%%%%%%%%%%%
%%%%%%%%%%%%%%%%%%%%%%%%%%%%%%%
%%%%%%%%%%%%%%%%%%%%%%%%%%%%%%%
%
%\section{Lattices in product of convergence spaces}
%Let $G$ be a locally compact group and $X$ a minimal compact metrisable $G$-space of cardinality $\ge 3$. We shall say that the pair $(G,X)$ is rank one if any non-elliptic element $g\in G$ acts convergencely on $X$. Recall that $g$ is elliptic if $\langle g\rangle$ is relatively compact, and that $\langle g\rangle\act X$ is convergence if there are $g^-,g^+\in X$ such that for any pair of compact sets $g^-\notin A,g^+\notin B$ the set $\{n:g^n\cdot A\cap B\ne\emptyset\}$ is finite. We shall say that a locally compact group $G$ is rank one if there is a compact metrisable $G$ space $X$ such that $(G,X)$ is a rank one pair. 
%For example of rank one pairs we have:
%
%\begin{itemize}
%\item Let $G$ be a rank simple algebraic group over a local field, and let $X$ be the visual boundary of the associated symmetric space or Bruhat--Tits tree.
%\item Let $Y$ be a proper Gromov hyperbolic space, $X=\partial Y$ and $G=\text{Isom}(Y)$.
%\end{itemize}
%
%\begin{thm}
%Let $n\ge 2$ and let $G_i,~i=1,\ldots n$ be rank one locally compact groups. Let $\gC\le\prod_{i=1}^nG_i$ be an irreducible lattice. Suppose that for each $\gc\in\gC\setminus\{1\}$ the projection to one of the factors is non-elliptic. Then $\gC$ is not IG.
%\end{thm}
%

\begin{rem}
An earlier version of this paper \cite{paper} was called ``The congruence subgroup property  does not imply invariable generation''.	
Theorem 1.1 in the earlier version stated that if $q$ is a rational quadratic from of signature $(2,2)$, then $\SO_q(\Z)$ is not IG. However, the proof of that theorem in \cite{paper} contained a gap which we do not know how to fix.
\end{rem}

%%%%%%%%%%%%%%%%%%%%%%%%%%%%%%%
%%%%%%%%%%%%%%%%%%%%%%%%%%%%%%%
%%%%%%%%%%%%%%%%%%%%%%%%%%%%%%%

\end{document}